\pgfplotsset{width=10cm,compat=1.9}
\numberwithin{equation}{section}
\newtheorem{thm}{Theorem}[section]
\newtheorem{lma}[thm]{Lemma}
\newtheorem{cor}[thm]{Corollary}
\newtheorem{prop}[thm]{Proposition}
\renewcommand{\epsilon}{\varepsilon}
\newcommand{\rd}{\mathbb{R}^d}
\newcommand{\zd}{\mathbb{Z}^d}
\renewcommand{\geq}{\geqslant}
\renewcommand{\leq}{\leqslant}
\newcommand{\hd}{\dim_{\textup{H}}}
\newcommand{\frd}{\dim_{\textup{Fr}}}
\newcommand{\fs}{\dim^\theta_{\mathrm{F}}}
\newcommand{\fd}{\dim_{\mathrm{F}}}
\newcommand{\sd}{\dim_{\mathrm{S}}}
\newcommand{\R}{\mathbb{R}}
\newcommand{\Z}{\mathbb{Z}}
\newcommand{\N}{\mathbb{N}}
\renewcommand{\S}{\mathcal{S}_+}
\newcommand{\spt}{\mathrm{spt}\,}
\newcommand{\J}{\mathcal{J}}
\newcommand{\half}{\frac{1}{2}}
\DeclareRobustCommand\widecheck[1]{{\mathpalette\@widecheck{#1}}}
\def\@widecheck#1#2{%
    \setbox\z@\hbox{\m@th$#1#2$}%
    \setbox\tw@\hbox{\m@th$#1%
       \widehat{%
          \vrule\@width\z@\@height\ht\z@
          \vrule\@height\z@\@width\wd\z@}$}%
    \dp\tw@-\ht\z@
    \@tempdima\ht\z@ \advance\@tempdima2\ht\tw@ \divide\@tempdima\thr@@
    \setbox\tw@\hbox{%
       \raise\@tempdima\hbox{\scalebox{1}[-1]{\lower\@tempdima\box
\tw@}}}%
    {\ooalign{\box\tw@ \cr \box\z@}}}
\newcommand\reallywidehat[1]{
\savestack{\tmpbox}{\stretchto{
  \scaleto{
    \scalerel*[\widthof{\ensuremath{#1}}]{\kern.1pt\mathchar"0362\kern.1pt}
    {\rule{0ex}{\textheight}}
  }{\textheight}
}{2.4ex}}
\stackon[-6.9pt]{#1}{\tmpbox}
}
\newcommand\reallywidecheck[1]{
\savestack{\tmpbox}{\stretchto{
  \scaleto{
    \scalerel*[\widthof{\ensuremath{#1}}]{\kern-.6pt\bigwedge\kern-.6pt}
    {\rule[-\textheight/2]{1ex}{\textheight}}
  }{\textheight}
}{0.5ex}}
\stackon[1pt]{#1}{\scalebox{-1}{\tmpbox}}
}
\definecolor{cite}{HTML}{2B77A4}
\definecolor{hyperlink}{HTML}{9E0D0D}
\newcommand\numberthis{\addtocounter{equation}{1}\tag{\theequation}}
\def\@setauthors{%
  \begingroup
  \def\thanks{\protect\thanks@warning}%
  \trivlist
  \centering\footnotesize \@topsep30\p@\relax
  \advance\@topsep by -\baselineskip
  \item\relax
  \author@andify\authors
  \def\\{\protect\linebreak}

  \normalsize\lowercase{\authors}%
  
	\ifx\@empty\contribs
  \else
    ,\penalty-3 \space \@setcontribs
    \@closetoccontribs
  \fi
  \endtrivlist
  \endgroup
}
\def\@settitle{\begin{center}
  \LARGE\lowercase{\@title}
    \end{center}%
  }
\newcommand{\authoremail}[1]{\email{\href{mailto:#1}{\color{cite}{#1}}}}
\newcommand{\authoraddress}[1]{\address{\normalfont{#1}}}
\title[Obtaining the Fourier spectrum via Fourier coefficients]{Obtaining the Fourier spectrum\\via Fourier coefficients}
\author{Marc Carnovale}
\author{Jonathan M. Fraser}
\thanks{JMF was financially supported by  a  \emph{Leverhulme Trust Research Project Grant} (RPG-2019-034).}
\author{Ana E. de Orellana}
\thanks{AEdO was financially supported by the University of St Andrews.}
\date{}
\begin{document}
\maketitle
\thispagestyle{empty}

\begin{abstract}
  The Fourier spectrum is a family of dimensions that interpolates between the Fourier and Hausdorff dimensions and are defined in terms of certain energies which capture Fourier decay. In this paper we obtain a convenient discrete representation of those energies using the Fourier coefficients. As an example application, we use this representation to establish  sharp bounds for the Fourier spectrum of a general   measure with bounded support, improving previous estimates of the second-named author.\\ \\
  \emph{Mathematics Subject Classification}: primary: 28A75, 42B10; secondary: 42B05.
\\
\emph{Key words and phrases}: Fourier transform, Fourier coefficients, energy, Fourier spectrum, Fourier dimension, Hausdorff dimension.
\end{abstract}
\tableofcontents

\section{Introduction}

Roughly speaking, the Hausdorff dimension of a measure quantifies its geometric scaling properties, while the Fourier dimension is more sensitive to  arithmetic structure,  smoothness, and  curvature.  For example, any measure supported on a hyperplane in $\rd$ has Fourier  dimension zero due to a lack of curvature relative to the ambient space,  and any measure on the middle third Cantor set has  Fourier dimension zero due to too much arithmetic resonance. On the other hand,  the surface measure on the  sphere in $\rd$ has Fourier dimension (and Hausdorff dimension) $d-1$ and random measures often  have large Fourier dimension (see for example \cite{salem}, \cite{bluhm} or \cite{PL11}). Dimension interpolation is the general approach of taking two related notions of fractal dimensions and studying them via a continuum of dimensions which live in-between.  One family of dimensions that interpolates between the Fourier and Hausdorff dimensions is the Fourier spectrum, introduced in \cite{Fra22}. The dimension interpolation approach is useful in this context to gain insight on the relation between the two dimensions and to obtain more information about the structure of measures than we would by studying each dimension in isolation.

The Fourier spectrum of a measure  is defined via certain energies which involve the Fourier transform of the measure.  These energies can be awkward to work with directly and the  purpose of this paper is to derive a simpler discrete expression for them which only requires understanding the Fourier coefficients, that is, the Fourier transform evaluated along the integers.  Our main result is Theorem~\ref{thm:coeffs}, which improves upon a partial result   derived in \cite[Proposition~3.1]{Fra22} which required  several unsatisfactory assumptions. In particular, we answer \cite[Question 3.2]{Fra22} in the affirmative. We demonstrate the utility of our main result by providing some simple applications.  For instance, we  obtain a general (and sharp) upper bound for the Fourier spectrum of a general measure, which states that the right semi-derivative of the Fourier spectrum at $0$ will never exceed the dimension of the ambient space (Proposition~\ref{propo:app1}).  Again, this improves on previous estimates from \cite{Fra22}, explicitly answering \cite[Question 1.4]{Fra22} in the negative.

\subsection{Notation} Throughout the paper we write $A\lesssim B$ whenever there exists a constant $c>0$ such that $A\leq cB$ and $A\approx B$  if $A\lesssim B$ and $B\lesssim A$. If the constant $c$ depends on some parameter $\lambda$ we wish to emphasise, we shall express it as a subscript on the corresponding symbol, e.g. $A\lesssim_{\lambda}B$. We adopt the convention that $A \lesssim \infty$ for all $A>0$ and  that $\infty \approx \infty$.

We write $\S(\rd)$ for the family of non-negative functions in the Schwartz class on $\rd$, i.e. 
functions $\psi:\rd\to\R_{\geq0}$ for which the partial derivatives of any order exist and decay at infinity faster than the reciprocal of any polynomial. That is, for all $N\in\N$ and all multi-indices $\alpha$, the inequality
\begin{equation*}
    \big| (\partial^{\alpha}\psi)(z) \big|\lesssim_{\alpha,N}\big( 1+|z| \big)^{-N},
\end{equation*}
holds for all $z\in\rd$. 

For $s>0$ we define the Riesz kernel as $\kappa_{s}(z) = |z|^{-s}$. Its Fourier and inverse Fourier transforms are, in the distributional sense and up to constants depending on $s$ and $d$, $\widehat{\kappa_{s}} = \kappa_{d-s}$ and $\widecheck{\kappa_{s}} = \kappa_{d-s}$, respectively (\cite[Theorem~2.4.6]{Gra14}).  We write $\spt \cdot$ to denote the support of a function or a measure. Given $x\in\rd$ and $r>0$, we write $B(x,r)$ for the open ball centred at $x$ and of radius $r$.

\section{Preliminaries}

\subsection{Energy integrals and the Fourier transform} Let us briefly introduce the relevant notions from Geometric Measure Theory. For a more thorough presentation the reader is referred to \cite{Fal03} and \cite{Mat15}.

It is well known that the \textit{Hausdorff dimension} of a Borel set $X\subseteq\rd$ can be defined as
\begin{equation*}
    \hd X = \sup\big\{s\leq d : I_{s}(\mu)<\infty \big\},
\end{equation*}
where the supremum is taken over all Borel measures $\mu$ supported on $X$ for which $0<\mu(X)<\infty$, and $I_{s}$ is the $s$-energy of the measure $\mu$ which can be expressed, for $s\in(0,d)$, as
\begin{equation}\label{eq:energy}
    I_{s}(\mu) = \iint \frac{d\mu (x)\,d\mu (y)}{|x-y|^{s}} \approx_{d,s} \int_{\rd}\big|\widehat{\mu}(z)\big|^2|z|^{s-d}\,dz.
\end{equation}

This connection  between the Hausdorff dimension of a set and the Fourier transform of measures it supports motivates the definition of the \emph{Fourier dimension} of a finite Borel measure $\mu$, which is
\begin{equation*}
    \fd\mu = \sup\big\{ s\geq0 : \big| \widehat{\mu}(z) \big|\lesssim |z|^{-\frac{s}{2}} \big\},
\end{equation*}
and the \emph{Fourier dimension} of a  Borel set  $X\subseteq\rd$, which is
\begin{equation*}
    \fd X = \sup\{ \fd\mu\leq d : \mu\text{ finite Borel measure on $X$}\}.
\end{equation*}

The definition of Hausdorff dimension for sets using energy integrals \eqref{eq:energy} gives rise to the \emph{Sobolev dimension} of a finite  Borel   measure, given by
\begin{equation*}
    \sd\mu = \sup\bigg\{ s\in\R : \int_{\rd}\big| \widehat{\mu}(z) \big|^2|z|^{s-d}\,dz<\infty \bigg\}.
\end{equation*}
These  notions of dimension are related by the inequalities $\fd\mu\leq\sd\mu $ for any finite  Borel measure $\mu$, and $\fd X\leq\hd X$ for any Borel set $X\subseteq\rd$.

\subsection{Fourier spectrum} The Fourier spectrum was introduced in \cite{Fra22} as a family of dimensions that, for measures, lie between the Fourier and Sobolev dimensions. To define them the author first introduced the $(s,\theta)$-energies for $\theta\in(0,1]$ and $s\geq0$ as
\begin{equation*}
    \J_{s,\theta}(\mu) = \bigg( \int_{\rd}\big| \widehat{\mu}(z) \big|^{\frac{2}{\theta}}|z|^{\frac{s}{\theta}-d}\,dz \bigg)^\theta,
\end{equation*}
and for $\theta =0$,
\begin{equation*}
    \J_{s,0}(\mu) = \sup_{z\in\rd}\big| \widehat{\mu}(z) \big|^2|z|^s.
\end{equation*}
Then the \emph{Fourier spectrum} of $\mu$ at $\theta$ is
\begin{equation*}
    \fs\mu = \sup\big\{ s\geq0 : \J_{s,\theta}(\mu)<\infty \big\}.
\end{equation*}

It is established in \cite[Theorem~1.1]{Fra22} that for any finite Borel measure $\mu$, $\fs\mu$ is non-decreasing and concave for all $\theta\in[0,1]$, and continuous for $\theta\in(0,1]$. Furthermore, if the measure $\mu$ is compactly supported, then $\fs\mu$ is also continuous at $\theta=0$. We refer the reader to \cite{Fra22} for a more comprehensive presentation of the Fourier spectrum.

\subsection{Fourier coefficients and energy integrals}

In \cite{HR03} an alternative representation for the $s$-energy of a measure was derived, this time in terms of its Fourier coefficients. If $\mu$ is a finite Borel measure on $\rd$ with support contained in $[0,1]^d$, then for all $s\in(0,d)$,
\begin{equation}\label{eq:senergycoeffs}
    I_{s}(\mu) \approx_{d,s}\big| \widehat{\mu}(0) \big|^2 + \sum_{z\in\zd\backslash\{ 0 \}}\big| \widehat{\mu}(z) \big|^2|z|^{s-d}.
\end{equation}

A similar formula for the $(s,\theta)$-energy exists (\cite[Proposition 3.1]{Fra22}), but was only established in the case where $\theta$ is  the reciprocal of a positive integer and $0<s<d\theta$. In this  case,
\begin{equation*}
    \J_{s,\theta}(\mu) \approx \bigg( \big| \widehat{\mu}(0) \big|^{\frac{2}{\theta}} + \sum_{z\in\theta\Z^d\backslash\{ 0 \}}\big| \widehat{\mu}(z) \big|^\frac{2}{\theta}|z|^{\frac{s}{\theta}-d} \bigg)^\theta.
\end{equation*}
This result has a number of drawbacks.  It is unsatisfactory to sum over $\theta\Z^d$ and not $\Z^d$ and to require $\theta$ to be the reciprocal of an integer, but it is perhaps even more unsatisfactory to require $s<d\theta$ because for measures with positive Fourier dimension, it is necessary to study the energies for $s>d\theta$ in order to get a complete picture.   The main objective of this paper is to prove Theorem~\ref{thm:coeffs}, stated below, where we address all the aforementioned drawbacks  in the case where the measure is supported on a closed subset of  $(0,1)^d$.

\section{Main result: obtaining the Fourier spectrum via Fourier coefficients}

The objective of this section is to state and prove the main theorem of the paper.
\begin{thm}\label{thm:coeffs}
  Let $\mu$ be a finite Borel measure on $\rd$ with support contained in $[\delta,1-\delta]^d$ for some $0<\delta<1/2$. Then for all $\theta\in(0,1]$ and $s>0$,
  \begin{equation}\label{eq:FourierCoeffs}
      \J_{s,\theta}(\mu)\approx_{\delta}\bigg( \big| \widehat{\mu}(0) \big|^{\frac{2}{\theta}} + \sum_{z\in\zd\backslash\{ 0 \}}\big| \widehat{\mu}(z) \big|^{\frac{2}{\theta}}|z|^{\frac{s}{\theta}-d} \bigg)^\theta.
  \end{equation}
In particular, 
\[
\fs \mu = \sup\bigg\{ s\in\R :\sum_{z\in\zd\backslash\{ 0 \}}\big| \widehat{\mu}(z) \big|^{\frac{2}{\theta}}|z|^{\frac{s}{\theta}-d} <\infty \bigg\}.
\]
\end{thm}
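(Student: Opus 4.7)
The plan is to exploit the strict containment $\spt\mu\subseteq[\delta,1-\delta]^d\subset(0,1)^d$ by fixing a cutoff $\chi\in C_c^\infty(\R^d)$ with $\chi\equiv 1$ on $[\delta,1-\delta]^d$ and $\spt\chi\subseteq(0,1)^d$; all implicit constants below will depend on $\chi$, and hence on $\delta$. Since $\chi\mu=\mu$, two identities for $\widehat{\mu}$ are available. First, $\widehat{\mu}=\widehat{\chi}\ast\widehat{\mu}$ gives the continuous convolution formula
\[
\widehat{\mu}(w)=\int_{\R^d}\widehat{\chi}(w-y)\,\widehat{\mu}(y)\,dy,\qquad w\in\R^d.
\]
Second, expanding the smooth, $(0,1)^d$-supported function $\chi(x)e^{-2\pi iw\cdot x}$ in Fourier series on $[0,1]^d$ and evaluating at $x\in\spt\mu$, where $\chi=1$, yields the sampling formula
\[
\widehat{\mu}(w)=\sum_{l\in\Z^d}\widehat{\chi}(w-l)\,\widehat{\mu}(l).
\]
Writing $p=2/\theta\geq 2$ and $\alpha=s/\theta-d\in(-d,\infty)$ (the latter since $s>0$), Jensen's inequality combined with the Schwartz bound $|\widehat{\chi}(u)|\lesssim_N(1+|u|)^{-N}$ converts each identity into a pointwise $|\widehat{\mu}(w)|^p$-bound.

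For the upper bound $\J_{s,\theta}(\mu)\lesssim(\cdots)^\theta$, I would apply Jensen to the sampling formula to obtain $|\widehat{\mu}(w)|^p\lesssim_\delta\sum_l(1+|w-l|)^{-N}|\widehat{\mu}(l)|^p$, integrate against $|w|^\alpha$, and swap sum and integral. The near/far kernel estimate $\int_{\R^d}|w|^\alpha(1+|w-l|)^{-N}\,dw\lesssim(1+|l|)^\alpha$, valid for $\alpha>-d$ and $N$ large enough, together with $(1+|l|)^\alpha\approx|l|^\alpha$ for $l\neq 0$ and a bounded $l=0$ contribution, would yield
\[
\int_{\R^d}|\widehat{\mu}(w)|^p|w|^\alpha\,dw\;\lesssim_\delta\;|\widehat{\mu}(0)|^p+\sum_{l\in\Z^d\setminus\{0\}}|\widehat{\mu}(l)|^p|l|^\alpha.
\]

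For the lower bound, I would apply Jensen to the convolution formula at $w=l\in\Z^d$, multiply by $|l|^\alpha$, and sum over $l\neq 0$. Swapping sum and integral and using the dual estimate $\sum_{l\neq 0}|l|^\alpha|\widehat{\chi}(l-y)|\lesssim(1+|y|)^\alpha$ gives $\sum_{l\neq 0}|\widehat{\mu}(l)|^p|l|^\alpha\lesssim_\delta\int(1+|y|)^\alpha|\widehat{\mu}(y)|^p\,dy$. Splitting at $|y|=1$, the inner piece is $\lesssim|\widehat{\mu}(0)|^p$ (using $|\widehat{\mu}|\leq\widehat{\mu}(0)$), and the outer piece is $\lesssim\int|y|^\alpha|\widehat{\mu}|^p\,dy$. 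To absorb the remaining $|\widehat{\mu}(0)|^p$ into $\J_{s,\theta}(\mu)^{1/\theta}$, I use the quantitative continuity $|\widehat{\mu}(y)-\widehat{\mu}(0)|\leq 2\pi\sqrt{d}\,|y|\widehat{\mu}(0)$ (immediate from $\spt\mu\subseteq[0,1]^d$), so $|\widehat{\mu}(y)|\geq\widehat{\mu}(0)/2$ on a fixed small ball around the origin; integrability of $|y|^\alpha$ there (since $\alpha>-d$) then yields $|\widehat{\mu}(0)|^p\lesssim\int|y|^\alpha|\widehat{\mu}|^p\,dy$ with a constant independent of $\delta$.

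Combining the two one-sided bounds and using $(x+y)^\theta\approx x^\theta+y^\theta$ for $\theta\in(0,1]$ yields \eqref{eq:FourierCoeffs}; the ``in particular'' statement is then immediate since $|\widehat{\mu}(0)|^{2/\theta}$ is always finite, so finiteness of $\J_{s,\theta}(\mu)$ is equivalent to finiteness of the tail sum. The main obstacle I anticipate is the sampling identity itself: this is where the strict interior condition on $\spt\mu$ is genuinely used, since without it one only has the convolution identity, which bounds the continuous integral by itself and not by a sum over $\Z^d$. The remaining weighted kernel estimates are classical in spirit, though the regimes $\alpha>0$ and $-d<\alpha<0$ each need a slightly different near/far split.
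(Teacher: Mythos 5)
Your argument is correct, and it takes a genuinely different route from the paper. The paper regularises $\mu$ by an approximate identity $\psi_n$, uses a fractional Laplacian $(-\Delta)^k$ to shift the exponent $\frac{s}{\theta}-d$ into a range where the weight can be realised as convolution with a Riesz kernel $\widecheck{\kappa_t}$, truncates that kernel by a cutoff $\rho_\varepsilon$ (controlling the error via Hausdorff--Young and Young's inequality), and only then invokes an \emph{unweighted} $L^p(\rd)$--$\ell^p(\zd)$ equivalence for Fourier transforms of measures supported strictly inside the cube, finally passing to the limit $n\to\infty$. You instead prove the weighted comparison directly: the same two reproducing identities that underlie the paper's Lemma~\ref{lemma1} --- the sampling formula $\widehat{\mu}(w)=\sum_{l}\widehat{\chi}(w-l)\widehat{\mu}(l)$ (this is exactly \eqref{wolffa}, and as you note it is the one place the interior-support hypothesis is essential) and the convolution formula $\widehat{\mu}=\widehat{\chi}*\widehat{\mu}$ --- are turned via Jensen/H\"older (using $\sum_l|\widehat{\chi}(w-l)|\lesssim 1$ and $\|\widehat{\chi}\|_{L^1}<\infty$) into pointwise bounds on $|\widehat{\mu}|^{2/\theta}$, and the weight is handled by the elementary kernel estimates $\int|w|^\alpha(1+|w-l|)^{-N}dw\lesssim(1+|l|)^\alpha$ and $\sum_{l\neq0}|l|^\alpha(1+|l-y|)^{-N}\lesssim(1+|y|)^\alpha$ for $\alpha=\frac{s}{\theta}-d>-d$ and $N$ large; the residual $|\widehat{\mu}(0)|^{2/\theta}$ term is absorbed by the small-ball estimate coming from the Lipschitz bound $|\widehat{\mu}(y)-\widehat{\mu}(0)|\leq2\pi\sqrt{d}\,|y|\,\widehat{\mu}(0)$. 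This buys several things: no approximate-identity limit, no distributional Riesz-kernel or fractional-Laplacian manipulations, no Riesz--Thorin interpolation, the whole range $s>0$ (equivalently $\alpha>0$ as well as $\alpha<0$) is handled uniformly because the rapid decay of $\widehat{\chi}$ beats any polynomial weight, and since all interchanges are Tonelli for non-negative terms the two one-sided bounds hold in $[0,\infty]$ without the paper's case split on finiteness. What the paper's route buys is modularity (it reuses the unweighted lemma from \cite{Car16} and a kernel formalism close to the classical energy identity \eqref{eq:senergycoeffs}). Two small points to make explicit when writing this up: the Jensen step needs the normalisation by $S(w)=\sum_l|\widehat{\chi}(w-l)|$ (or H\"older with exponents $2/\theta$ and $2/(2-\theta)$), so your implicit constants depend on $\theta$, $s$ and $d$ as well as $\delta$ --- which is consistent with the theorem, since the paper's $\approx_\delta$ also hides such dependence --- and the admissible $N$ in the kernel estimates must be chosen after $\alpha$, as you anticipate with the near/far splits.
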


The requirement that there should exist such a $\delta$ in the above is necessary.  To see this, let $d=1$ and $\mu$ be the Lebesgue measure on $[0,1]$.  Then $ \widehat{\mu}(z) = 0$ for all $z \in \mathbb{Z} \backslash\{ 0 \}$ but the Sobolev dimension (and thus Fourier spectrum) is finite; in fact $\fs \mu = 2$ for all $\theta \in [0,1]$.  The point here is that sampling the Fourier transform along the integers does not ``see'' the relevant fluctuations. 

We note that the analogous result  indeed holds in the $\theta=0$ case, dating back to Kahane \cite{kahane}.  In particular, if $\mu$ is a measure    supported in $[\delta,1-\delta]^d$ for some $0<\delta<1/2$ and $t>0$ is such that
\[
\big| \widehat{\mu}(z) \big| = O(|z|^{-t}) 
\]
for $z \in \mathbb{Z}^d$, then the same holds (possibly with a different implicit constant) for $z \in \mathbb{R}^d$, see \cite[Lemma~1, page 252]{kahane}. Therefore, the Fourier dimension of a measure can be observed by sampling only along the integers.  Again the condition that the measure is  supported in $[\delta,1-\delta]^d$ is necessary and again this can be seen by the Lebesgue measure example.  

\subsection{The Fourier transform and the Laplacian operator}

To prove Theorem~\ref{thm:coeffs} we will replace $|z|^{\frac{s}{\theta}-d}$ by a suitable kernel involving the Laplacian operator. Recall that the Laplacian operator is defined for sufficiently smooth functions $\psi$ on $\R^d$ as
\begin{equation*}
    \psi\mapsto\Delta \psi = \sum_{j=1}^d \frac{\partial^2\psi}{\partial x_{j}^2}.
\end{equation*}
Fourier transforms behave particularly well with respect to differential operators. In what follows we shall only consider non-negative functions $\psi$ in the Schwartz class, but the same results hold in more general contexts.

From the definition of the Fourier transform and the integration by parts formula it is easy to see that
\begin{equation*}
    \widehat{\Delta \psi}(z) = (-4\pi|z|^2)\,\widehat{\psi}(z).
\end{equation*}
Iterating, for  higher powers of the Laplacian we get
\begin{equation}\label{eq:LaplacianInteger}
     \reallywidehat{(-\Delta)^k\psi}(z)  \approx |z|^{2k}\, \widehat{\psi}(z),
\end{equation}
where we replace $\Delta$ with $-\Delta$ in order to make the constants positive and maintain consistency with the $\approx$ notation. This leads to a generalisation  to  (non-integer) powers of the Laplacian. For $k\geq0$, the \emph{fractional Laplacian operator} $(-\Delta)^k$ is defined   by extension as
\begin{equation}\label{eq:Laplacian}
    \reallywidehat{(-\Delta)^k\psi}(z) = |z|^{2k}\, \widehat{\psi}(z).
\end{equation}
Note that when $k\in\N$, \eqref{eq:Laplacian} is not the same as, but is  comparable to,  \eqref{eq:LaplacianInteger}. However, since the constants involved are unimportant for our objective (they do depend on $k$ but not on $z$ or $\psi$), we adopt \eqref{eq:Laplacian} as the definition for all $k \geq 0$. We refer the reader to the survey article \cite{Aba21} for a more detailed discussion about the fractional Laplacian.

\subsection{Some preliminary estimates involving $L^p$ norms}

The proof of Theorem~\ref{thm:coeffs} will use three technical lemmas which we  prove in this subsection. The first lemma shows that for the Fourier transform of certain measures, the $\ell^p(\Z^d)$ and $L^p(\rd)$ norms are equivalent. This lemma can be found in \cite[Lemma~4.4]{Car16}, but we prove it here for completeness and to keep notation consistent.

  \begin{lma}\label{lemma1}
    Let $\nu$ be a measure with $\spt\nu\subseteq[0,1]^d$. Then for all $p\in[1,\infty]$,
    \begin{equation*}
        \Bigg( \sum_{z\in \zd}\big| \widehat{\nu}(z) \big|^p \Bigg)^{1/p} \lesssim \bigg( \int_{\rd} \big| \widehat{\nu}(z) \big|^p\,dz\bigg)^{1/p},
    \end{equation*}
    and if $\spt\nu\subseteq[\delta,1-\delta]^d$ for some $0<\delta<1/2$, then,
    \begin{equation*}
        \bigg( \int_{\rd}\big| \widehat{\nu}(z) \big|^p\,dz \bigg)^{1/p}\approx_{\delta}\Bigg( \sum_{z\in\zd}\big| \widehat{\nu}(z) \big|^p \Bigg)^{1/p}.
    \end{equation*}
  \end{lma}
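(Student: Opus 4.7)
The plan is to handle the two inequalities by rather different strategies, each built on an auxiliary Schwartz function tailored to the support of $\nu$. The $p=\infty$ case is essentially immediate in both directions (using the reconstruction identity below for the reverse bound), so below I focus on $p \in [1, \infty)$.

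For the first inequality, I would select $\psi \in \S(\rd)$ with $\psi \equiv 1$ on $[0,1]^d$, so that the measure $\psi\,d\nu$ coincides with $d\nu$ and consequently $\widehat{\nu} = \widehat{\psi}*\widehat{\nu}$. Expanding the convolution and applying Hölder's inequality with weight $|\widehat{\psi}(z-y)|$ yields
\[
|\widehat{\nu}(z)|^p \leq \|\widehat{\psi}\|_1^{p-1} \int |\widehat{\psi}(z-y)|\,|\widehat{\nu}(y)|^p\,dy.
\]
Summing over $z \in \zd$ and swapping sum and integral, the estimate follows at once because $\sum_{z\in\zd} |\widehat{\psi}(z-y)|$ is bounded uniformly in $y$ by the Schwartz decay of $\widehat{\psi}$.

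For the reverse direction I would exploit the strictly tighter support of $\nu$. Since $|\widehat{\nu}|$ is invariant under translations of $\nu$, a preliminary re-centring reduces matters to the case $\spt\nu \subseteq [-\eta/2,\eta/2]^d$ with $\eta = 1-2\delta < 1$. This spacing enables me to pick $\phi \in \S(\rd)$ whose Fourier transform is a smooth bump, compactly supported in $(-1/2,1/2)^d$ and identically $1$ on $[-\eta/2,\eta/2]^d$. The central step, and the main obstacle, is to derive the Shannon--Whittaker reconstruction formula
\[
\widehat{\nu}(y) = \sum_{n\in\zd} \widehat{\nu}(n)\,\phi(y-n), \qquad y \in \rd.
\]
This I would prove by Poisson summation applied to the $\zd$-periodization of the spectrum of $\widehat{\nu}$: the margin $\eta<1$ is precisely what guarantees that the $\zd$-translates of $\spt \nu$ do not overlap, so that multiplication by $\widehat{\phi}$ recovers the original spectrum without aliasing. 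Granted this identity, Hölder's inequality, together with the uniform bound $\sum_{n\in\zd}|\phi(y-n)|\lesssim 1$, gives
\[
|\widehat{\nu}(y)|^p \lesssim \sum_{n\in\zd} |\widehat{\nu}(n)|^p\,|\phi(y-n)|,
\]
and integrating in $y$ and invoking $\|\phi\|_1<\infty$ converts the right-hand side into the required $\ell^p$-sum, completing the proof.
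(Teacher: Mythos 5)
Your proposal is correct, and although it rests on the same two pillars as the paper's argument---a smooth cutoff giving $\widehat{\nu}=\widehat{\psi}*\widehat{\nu}$ for the direction $\ell^p\lesssim L^p$, and the sampling identity $\widehat{\nu}(y)=\sum_{n\in\zd}\widehat{\nu}(n)\,\phi(y-n)$ for the reverse---it finishes both steps differently. For the first direction the paper takes $g$ with $g\equiv 1$ on $[0,1]^d$ and $\spt\widehat{g}\subseteq[-\half,\half]^d$ so that the shifted cube integrals tile $\rd$; you instead use an arbitrary Schwartz cutoff together with the uniform bound $\sum_{z\in\zd}|\widehat{\psi}(z-y)|\lesssim 1$, which works just as well and avoids demanding a band-limited Schwartz function equal to $1$ on a cube (strictly speaking no such $g$ exists, since band-limited functions are real-analytic; your estimate is exactly how one repairs that step). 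For the harder direction, your reconstruction formula is, after recentring, precisely the identity \eqref{wolffa} that the paper imports from Wolff (your $\phi$ plays the role of $\widehat{h}$), and your Poisson-summation/no-aliasing justification using the margin $\eta=1-2\delta<1$ is a valid derivation of it; but whereas the paper establishes only the endpoints $p=1$ and $p=\infty$ and then invokes Riesz--Thorin, you apply H\"older with weight $|\phi(y-n)|$ and the bound $\sup_y\sum_n|\phi(y-n)|\lesssim 1$, then integrate in $y$, handling all $p$ at once without any interpolation theorem. Two cosmetic points: it is $\nu$ itself (the spectrum of $\widehat{\nu}$) that you periodize, not $\widehat{\nu}$; and depending on the transform convention the identity may read $\phi(n-y)$ rather than $\phi(y-n)$, which is immaterial since only $|\phi|$ enters and $\phi$ may be chosen even.
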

\begin{proof}
  For $p=\infty$, the first inequality is immediate. Thus, we consider $p<\infty$.

  Let $g\in\S(\rd)$ be such that $g(x) =1$ for $x\in[0,1]^d$ and for which $\spt \widehat{g}\subseteq \big[ -\half,\half \big]^d$. Then, since $\nu = g\nu$,
  \begin{equation*}
      \big| \widehat{\nu}(z) \big| = \big| \widehat{\nu}*\widehat{g}(z) \big| = \Bigg| \int_{\big[ -\half,\half \big]^d}\widehat{\nu}(z-\eta)\widehat{g}(\eta)\,d\eta \Bigg| \leq \| \widehat{g}\|_{L^\infty(\rd)}\int_{\big[ -\half,\half \big]^d}\big| \widehat{\nu}(z-\eta) \big|\,d\eta.
  \end{equation*}
  Thus, by H\"older's inequality,
  \begin{align*}
      \sum_{z\in\zd}\big| \widehat{\nu}(z) \big|^p&\lesssim \sum_{z\in\zd}\Bigg( \int_{\big[ -\half,\half \big]^d}\big| \widehat{\nu}(z-\eta)\,d\eta \big| \Bigg)^p\\
&\leq\sum_{z\in\zd}\int_{\big[ -\half,\half \big]^d}\big| \widehat{\nu}(z-\eta) \big|^p\,d\eta\\
& = \int_{\rd}\big| \widehat{\nu}(z) \big|^p\,dz.
  \end{align*}
  Now we prove the second part. Let $h\in\S(\rd)$ be such that $h(x) = 1$ for $x\in[\delta,1-\delta]^d$ and $\spt h\subseteq[0,1]^d$. Then, for all $z\in\rd$, \cite[(111)]{Wol03} yields
  \begin{equation} \label{wolffa}
      \widehat{\nu}(z) = \widehat{h\nu}(z) = \sum_{n\in\zd}\widehat{\nu}(n)\widehat{h}(z-n).
  \end{equation}
  Thus, Fubini's theorem and the translation invariance of the $L^1(\rd)$ norm give
  \begin{align*} 
    \|\widehat{\nu}\|_{L^1(\rd)} &= \int_{\rd}\Big| \sum_{n\in\zd}\widehat{\nu}(n)\widehat{h}(z-n) \Big|\,dz \\
&\leq \sum_{n\in\zd}\big| \widehat{\nu}(n) \big|\int_{\rd}\big| \widehat{h}(z-n) \big|\,dz \\
&\leq\|\widehat{\nu}\|_{\ell^1(\zd)}\|\widehat{h}\|_{L^1(\rd)}.\numberthis\label{eq:boundnuh1}
  \end{align*}
  Using the first part of the lemma, for all $z\in\rd$,
  \begin{equation*}
      \sum_{n\in\Z^d}\big| \widehat{h}(z-n) \big|\lesssim \int_{\rd}  | \widehat{h}(z-n) | \,dn = \|\widehat{h}\|_{L^1(\rd)}.
  \end{equation*}
  Then, by \eqref{wolffa} and  H\"older's inequality,
  \begin{align*}
    \|\widehat{\nu}\|_{L^\infty(\rd)} &= \sup_{z\in\rd}\Big| \sum_{n\in\zd}\widehat{\nu}(n)\widehat{h}(z-n) \Big|\\
    &\leq \sup_{z\in\rd}\|\widehat{\nu}\|_{\ell^{\infty}(\zd)}\sum_{n\in\zd}\big| \widehat{h}(z-n) \big|\\
    &\lesssim \|\widehat{\nu}\|_{\ell^\infty(\zd)}\,\|\widehat{h}\|_{L^1(\rd)}.\numberthis\label{eq:boundnuhinf}
  \end{align*}
  Finally, using the Riesz--Thorin interpolation theorem (\cite[Theorem~1.3.4]{Gra14}) with the operator being the convolution with $\widehat h$, we can interpolate between \eqref{eq:boundnuh1} and \eqref{eq:boundnuhinf}. The result is that  for all  $p\in(1,\infty)$,
  \begin{equation*}
    \|\widehat{\nu}\|_{L^p(\rd)}=\|\widehat{\nu}*\widehat{h}\|_{L^p(\rd)}\leq \|\widehat{\nu}\|_{\ell^{p}(\zd)}\|\widehat{h}\|_{L^1(\rd)} \approx  \|\widehat{\nu}\|_{\ell^{p}(\zd)},
  \end{equation*}
  completing the proof.
\end{proof}

The following two lemmas will be used to guarantee that the $L^p$ and $\ell^p$ errors obtained when replacing the Riesz kernel with a compactly supported approximation of it are bounded.

\begin{lma}\label{lemma2}
  Let $\mu$ be a finite Borel measure supported in $[0,1]^d$, and $\psi\in\S(\rd)$ be compactly supported. For $\varepsilon>0$ let $\rho_{\varepsilon}\in\S(\rd)$ with $\spt\rho_{\varepsilon}\subseteq B(0,\varepsilon)$ and such that $\rho_{\varepsilon}(z)=1$ for $z\in B(0,\varepsilon/2)$. Then, for any $t\in(0,d)$, $k\geq0$, and $\theta\in(0,1]$ such that $\theta>\frac{2t-4k}{d}$,
  \begin{equation}\label{eq:lemma2}
    \Big\| \reallywidehat{\big( (-\Delta)^k(\psi) \big)*\mu * \widecheck{\kappa_{t}}}  - \reallywidehat{\big( (-\Delta)^k(\psi) \big)*\mu * (\rho_{\varepsilon}\widecheck{\kappa_{t}})}\Big\|_{L^{2/\theta}(\rd)}\lesssim_\varepsilon 1.
  \end{equation}
\end{lma}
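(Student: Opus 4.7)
The plan is to work entirely on the Fourier side. By the convolution theorem, together with \eqref{eq:Laplacian} and the Fourier inversion identity $\widehat{\widecheck{\kappa_t}} = \kappa_t$, the expression whose $L^{2/\theta}(\rd)$ norm must be bounded equals
\[
F(z) \;:=\; |z|^{2k}\widehat{\psi}(z)\widehat{\mu}(z)\, D(z), \qquad D(z) \;:=\; |z|^{-t} - \widehat{\rho_\varepsilon \widecheck{\kappa_t}}(z).
\]
Thus the task reduces to estimating $\|F\|_{L^{2/\theta}(\rd)}$.

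The next step is to bound $D$ pointwise. Since $t\in(0,d)$, the function $\widecheck{\kappa_t}(x)=|x|^{t-d}$ is locally integrable, and as $\rho_\varepsilon$ is compactly supported, $\rho_\varepsilon \widecheck{\kappa_t}$ lies in $L^1(\rd)$ with norm depending on $\varepsilon$. Its Fourier transform is therefore continuous and uniformly bounded by $\|\rho_\varepsilon \widecheck{\kappa_t}\|_{L^1(\rd)}$. Combining with the explicit form of $|z|^{-t}$ yields the pointwise estimate
\[
|D(z)| \;\lesssim_\varepsilon\; |z|^{-t} + 1 \qquad \text{for all } z\neq 0.
\]

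Finally, I would bound $\|F\|_{L^{2/\theta}(\rd)}^{2/\theta}$ directly. Using the estimate on $D$ together with the inequality $(a+b)^{2/\theta}\lesssim a^{2/\theta}+b^{2/\theta}$ (valid since $2/\theta\geq 2$), the boundedness $|\widehat{\mu}|\leq\mu(\rd)$, and the Schwartz decay of $\widehat{\psi}$, the integral is controlled by
\[
\int_{\rd} |z|^{4k/\theta}\,|\widehat{\psi}(z)|^{2/\theta}\bigl(1 + |z|^{-2t/\theta}\bigr)\,dz.
\]
For $|z|>1$ the rapid decay of $\widehat{\psi}$ beats any polynomial factor and produces a finite tail with no further constraints. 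The essential check is near the origin, where the worst contribution is $|z|^{(4k-2t)/\theta}$, which is locally integrable on $\rd$ precisely when $(4k-2t)/\theta>-d$, i.e., when $\theta>(2t-4k)/d$ --- exactly the hypothesis of the lemma. The main obstacle I anticipate is thus not analytic but conceptual: recognising that once the problem is transferred to the Fourier side, no cancellation in $D$ finer than the crude pointwise bound is needed, and that the integrability threshold at the origin is forced to coincide with the stated condition on $\theta$.
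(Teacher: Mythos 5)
Your argument is correct for the lemma as literally stated, but it takes a genuinely different route from the paper's, and the difference matters. The paper keeps the error as a single convolution against the kernel $\widecheck{\kappa_t}(1-\rho_\varepsilon)$, which vanishes on $B(0,\varepsilon/2)$, and transfers to the spatial side: Hausdorff--Young with $p=2/\theta$, $p'=2/(2-\theta)$, followed by Young's convolution inequality, reduces the problem to $\|\psi*\mu\|_{L^1}\,\big\|(-\Delta)^k\big(\widecheck{\kappa_t}(1-\rho_\varepsilon)\big)\big\|_{L^{p'}(\rd)}$, and the hypothesis $\theta>\frac{2t-4k}{d}$ enters as the condition $(d-t+2k)p'>d$ making the tail $\kappa_{d-t+2k}$ lie in $L^{p'}(\rd\setminus B(0,\varepsilon))$. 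You instead stay on the frequency side, give up the cancellation in $D(z)=|z|^{-t}-\reallywidehat{\rho_\varepsilon\widecheck{\kappa_t}}(z)$ via the crude bound $|D(z)|\lesssim_\varepsilon|z|^{-t}+1$, and compensate with $|\widehat\mu|\leq\mu(\rd)$ and the rapid decay of $\widehat\psi$; the hypothesis on $\theta$ then appears as local integrability of $|z|^{(4k-2t)/\theta}$ at the origin. This is more elementary (no Hausdorff--Young or Young needed), and your formal identification of the transform of the convolutions is at the same level of rigour as the paper's own manipulations, so as a proof of the displayed inequality for a fixed $\psi$ it stands. What it gives up is the quality of the implicit constant: the paper's constant depends on $\psi$ only through $\|\psi\|_{L^1(\rd)}$, whereas yours depends on weighted norms of $\widehat\psi$ such as $\int_{\rd}|z|^{4k/\theta}|\widehat\psi(z)|^{2/\theta}\,dz$. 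In the proof of Theorem~\ref{thm:coeffs} the lemma is applied to the approximate identity $\psi_n(x)=n^d\psi(nx)$, for which $\widehat{\psi_n}(z)=\widehat\psi(z/n)$ and these weighted norms grow like a power of $n$; your bound is therefore not uniform in $n$, and the limiting argument ($n\to\infty$) in the theorem's proof would not go through with your version of the lemma without modification, so keep the $\psi$-dependence explicit if you intend it as a drop-in replacement.
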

\begin{proof}
  Let $p \coloneqq \frac{2}{\theta}$ and  $p' \coloneqq \frac{2}{2-\theta}$ be conjugate exponents. First note that from \eqref{eq:Laplacian} and the linearity of both the Fourier transform and the convolution, we obtain
\begin{align*}
  \reallywidehat{\big( (-\Delta)^k(\psi) \big)*\mu * \widecheck{\kappa_{t}}}  - \reallywidehat{\big( (-\Delta)^k(\psi) \big)*\mu * (\rho_{\varepsilon}\widecheck{\kappa_{t}})}&=\reallywidehat{\big((-\Delta)^k(\psi)\big) * \mu * \big( \widecheck{\kappa_{t}}(1-\rho_{\varepsilon}) \big)} \\
  &= \reallywidehat{(-\Delta)^k(\psi)}\,\widehat{\mu}\,\reallywidehat{\widecheck{\kappa_{t}}(1-\rho_{\varepsilon})}\\
  &= |z|^{2k}\,\widehat{\psi}\,\widehat{\mu}\,\reallywidehat{\widecheck{\kappa_{t}}(1-\rho_{\varepsilon})}\\
  &= \reallywidehat{\psi*\mu}\,|z|^{2k}\,\reallywidehat{\widecheck{\kappa_{t}}(1-\rho_{\varepsilon})}\\
  &= \reallywidehat{\psi*\mu}\,\reallywidehat{(-\Delta)^k \big(\widecheck{\kappa_{t}}(1-\rho_{\varepsilon  })\big)}\\
  &= \reallywidehat{(\psi*\mu)*\big( (-\Delta)^k \big(\widecheck{\kappa_{t}}(1-\rho_{\varepsilon  })\big) \big)}.\numberthis\label{eq:lemma2equation}
\end{align*}

Since $p'\in[1,2]$,  first applying the Hausdorff--Young inequality for the conjugate exponents $p$ and $p'$, and then Young's inequality for convolutions, we have, using \eqref{eq:lemma2equation} in the left-hand side of \eqref{eq:lemma2},
\begin{align*}
  \Big\| \reallywidehat{\big( (-\Delta)^k(\psi) \big)*\mu * \widecheck{\kappa_{t}}}  - \reallywidehat{\big( (-\Delta)^k(\psi) \big)*\mu * (\rho_{\varepsilon}\widecheck{\kappa_{t}})}&\Big\|_{L^{2/\theta}(\rd)} \\
  &= \Big\| \reallywidehat{(\psi*\mu) * \big( (-\Delta)^k\big( \widecheck{\kappa_{t}}(1-\rho_{\varepsilon}) \big) \big)}\Big\|_{L^p(\rd)} \\
  &\leq \big\| (\psi*\mu) * \big((-\Delta)^k\big( \widecheck{\kappa_{t}}(1-\rho_{\varepsilon}) \big)\big)\big\|_{L^{p'}(\rd)}\\
  &\leq \big\| \psi*\mu \big\|_{L^{1}(\rd)} \big\| (-\Delta)^k\big( \widecheck{\kappa_{t}}(1-\rho_{\varepsilon}) \big)\big\|_{L^{p'}(\rd)}.
\end{align*}
For the first term, Young's inequality for convolutions gives
\begin{equation*}
  \big\| \psi*\mu \big\|_{L^1(\rd)}\leq  \big\| \psi \big\|_{L^1(\rd)}\mu(\rd) \lesssim 1
\end{equation*}
For the second, note that $\widecheck{\kappa_{t}}(1-\rho_{\varepsilon})$ is $\widecheck{\kappa_{t}}$ on $\rd\backslash B(0,\varepsilon)$, and $0$ on $B(0,\varepsilon/2)$. For any $s>0$, $(-\Delta)^k\kappa_{s} = \kappa_{s+2k}$, in the distributional sense and up to constants depending on $s$ and $d$. This, and the triangle inequality, yields
\begin{equation*}
\big\| (-\Delta)^k\big( \widecheck{\kappa_{t}}(1-\rho_{\varepsilon}) \big)\big\|_{L^{p'}(\rd)}\leq \big\|\kappa_{d-t+2k}\big\|_{L^{p'}(\rd\backslash B(0,\varepsilon))} + \big\|  (-\Delta)^k\big( \widecheck{\kappa_{t}}(1-\rho_{\varepsilon}) \big)\big\|_{L^{p'}(B(0,\varepsilon)\backslash B(0,\varepsilon/2))}.
\end{equation*}
Since $p'>\frac{d}{d-t+2k}$, 
\begin{equation*}
\big\|\kappa_{d-t+2k}\big\|_{L^{p'}(\rd\backslash B(0,\varepsilon))} \lesssim \varepsilon^{-(d-t+2k)+1/p'} \lesssim_\varepsilon 1.
\end{equation*}
Finally, recalling that  $k$ is fixed, $(-\Delta)^k\big( \widecheck{\kappa_{t}}(1-\rho_{\varepsilon}) \big)\in L^\infty\big(B(0,\varepsilon)\backslash B(0,\varepsilon/2)\big)$. Given that the restriction of the $d$-dimensional Lebesgue measure to $B(0,\varepsilon)\backslash B(0,\varepsilon/2)$ is finite, then for any $q<\infty$, $(-\Delta)^k\big( \widecheck{\kappa_{t}}(1-\rho_{\varepsilon}) \big)\in L^q\big(B(0,\varepsilon)\backslash B(0,\varepsilon/2)\big)$, in particular this holds for $q=p'$, which gives the result.
\end{proof}
\begin{lma}\label{lemma:lemma2sum}
  Let $\mu$ be a finite Borel measure supported in $[0,1]^d$ and $\psi\in\S(\rd)$ be compactly supported. For $\varepsilon>0$ let  $\rho_{\varepsilon}\in\S(\rd)$ with $\spt\rho_{\varepsilon}\subseteq B(0,\varepsilon)$ and such that $\rho_{\varepsilon}(z)=1$ for $z\in B(0,\varepsilon/2)$. Then, for any $t\in(0,d)$, $k\geq0$, and $\theta\in(0,1]$ such that $\theta>\frac{2t-4k}{d}$,
  \begin{equation*}
    \Big\| \reallywidehat{\big( (-\Delta)^k(\psi) \big)*\mu * \widecheck{\kappa_{t}}}  - \reallywidehat{\big( (-\Delta)^k(\psi) \big)*\mu * (\rho_{\varepsilon}\widecheck{\kappa_{t}})}\Big\|_{\ell^{2/\theta}(\zd\backslash\{ 0 \})} \lesssim_\varepsilon 1.
  \end{equation*}
\end{lma}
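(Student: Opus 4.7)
The plan is to obtain the bound by a direct pointwise estimate at nonzero integer points, leveraging the rapid decay of $\widehat{\psi*\mu}$. The starting point is the algebraic identity \eqref{eq:lemma2equation} already established in the proof of Lemma~\ref{lemma2}, which rewrites the quantity inside the $\ell^{2/\theta}$ norm as
\[
F(z)\;:=\;\reallywidehat{(\psi*\mu)*\big((-\Delta)^k(\widecheck\kappa_t(1-\rho_\varepsilon))\big)}(z)\;=\;\widehat{\psi*\mu}(z)\,|z|^{2k}\,\widehat{\widecheck\kappa_t(1-\rho_\varepsilon)}(z),
\]
a classical function on $\rd\setminus\{0\}$. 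Conceptually, restricting to $\zd\setminus\{0\}$ excludes the origin entirely, where the factor $|z|^{2k-t}$ produces the non-integrable singularity that forced the condition $\theta>(2t-4k)/d$ in Lemma~\ref{lemma2}; here that condition will play no essential role and the proof is elementary.

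First I would observe that, because $\psi$ is compactly supported and smooth and $\mu$ is a finite Borel measure with compact support, $\psi*\mu\in C_c^\infty(\rd)$, so $\widehat{\psi*\mu}$ is rapidly decreasing and $|\widehat{\psi*\mu}(z)|\lesssim_N(1+|z|)^{-N}$ for every $N\in\N$. Next I would produce a uniform bound on $\widehat{\widecheck\kappa_t(1-\rho_\varepsilon)}$ at nonzero integer points. Splitting $\widecheck\kappa_t=\widecheck\kappa_t\rho_\varepsilon+\widecheck\kappa_t(1-\rho_\varepsilon)$ and using the distributional identity $\widehat{\widecheck\kappa_t}=c\,\kappa_t$, one obtains for $z\neq 0$
\[
\widehat{\widecheck\kappa_t(1-\rho_\varepsilon)}(z)\;=\;c|z|^{-t}\;-\;\widehat{\widecheck\kappa_t\rho_\varepsilon}(z),
\]
and the subtracted term is bounded by $\|\widecheck\kappa_t\rho_\varepsilon\|_{L^1(\rd)}\lesssim_\varepsilon 1$, using $\spt\rho_\varepsilon\subseteq B(0,\varepsilon)$ together with the integrability of the singularity $|x|^{-(d-t)}$ at the origin (this is where $t>0$ enters). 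Since $|n|^{-t}\leq 1$ for $n\in\zd\setminus\{0\}$, this gives $|\widehat{\widecheck\kappa_t(1-\rho_\varepsilon)}(n)|\lesssim_\varepsilon 1$.

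Multiplying the two estimates yields $|F(n)|\lesssim_{\varepsilon,N}(1+|n|)^{2k-N}$ on $\zd\setminus\{0\}$ for every $N$, and choosing any $N>2k+d\theta/2$ makes $\sum_{n\in\zd\setminus\{0\}}|F(n)|^{2/\theta}$ summable with a bound of the form $\lesssim_\varepsilon 1$; raising to the power $\theta/2$ then delivers the claim. The only real care needed is in interpreting $\widehat{\widecheck\kappa_t}=c\kappa_t$ as tempered distributions, but away from $z=0$ it is the classical equality of functions $c|z|^{-t}$, and the compactly supported $L^1$ piece $\widecheck\kappa_t\rho_\varepsilon$ has a bounded continuous Fourier transform, so the whole argument is routine once the factorisation of $F$ is in hand. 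Note that, in contrast with the expected parallel with Lemma~\ref{lemma2}, the machinery of Lemma~\ref{lemma1} is not invoked.
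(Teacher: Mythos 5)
Your proof is correct, but it takes a genuinely different route from the paper's. The paper disposes of this lemma in one line: it reruns the proof of Lemma~\ref{lemma2} verbatim, noting that the Hausdorff--Young and Young inequalities also apply with the counting measure on $\zd\setminus\{0\}$ (the origin being removed precisely so that $\kappa_t$ makes sense), so the final constant depends on $\psi$ only through $\|\psi*\mu\|_{L^1(\rd)}\le\|\psi\|_{L^1(\rd)}\mu(\rd)$. You instead argue pointwise from \eqref{eq:lemma2equation}: since $\psi*\mu\in C_c^\infty(\rd)$, its transform decays faster than any polynomial, while $\reallywidehat{\widecheck{\kappa_{t}}(1-\rho_{\varepsilon})}(n)=c|n|^{-t}-\reallywidehat{\rho_{\varepsilon}\widecheck{\kappa_{t}}}(n)$ is bounded at nonzero integers by $c+\|\rho_{\varepsilon}\widecheck{\kappa_{t}}\|_{L^1(\rd)}\lesssim_{\varepsilon}1$ (local integrability of $\kappa_{d-t}$ uses $t>0$), and the resulting rapid decay of the summand makes the $\ell^{2/\theta}$ sum converge for any $N>2k+d\theta/2$. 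All of these steps check out, and your observation that the hypothesis $\theta>\frac{2t-4k}{d}$ becomes superfluous once $z=0$ is excluded is accurate (it is the frequency-side reflection of the constraint the paper uses on the spatial side in Lemma~\ref{lemma2}). The trade-off is the quality of the constant: your bound depends on $\psi$ through its Schwartz seminorms, i.e.\ through the rapid decay of $\widehat{\psi}$, whereas the paper's Hausdorff--Young/Young route depends on $\psi$ only via $\|\psi\|_{L^1(\rd)}$. This matters for how the lemma is actually deployed in the proof of Theorem~\ref{thm:coeffs}, where $\psi=\psi_n$ is an approximate identity with $\|\psi_n\|_{L^1(\rd)}=1$ but $\widehat{\psi_n}(z)=\widehat{\psi}(z/n)$, so the constant your argument produces blows up with $n$ and the comparison would not survive the limit $n\to\infty$. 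In short: your proof establishes the lemma as literally stated (the constant is allowed to depend on the fixed data $\mu,\psi,t,k,\theta$), and is arguably more elementary and more general, but it could not be substituted into the paper's application without first addressing this uniformity in $\psi$.
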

\begin{proof}
  The proof follows that of Lemma~\ref{lemma2} after noting that the Hausdorff--Young and Young inequalities can also be applied to the counting measure on $\zd\backslash\{ 0 \}$, where we remove $0$ from the domain so that $\kappa_{t}$ is well-defined.
\end{proof}

\subsection{Proof of Theorem~\ref{thm:coeffs}}
In this subsection we give the  proof of the main theorem.   We will first assume that the left-hand side of \eqref{eq:FourierCoeffs} is finite, and prove that the right-hand side is also finite and both are comparable.

  Let $\psi\in\S(\rd)$ be a function supported in $(0,1)^d$ with $\|\psi\|_{L^1(\rd)} = 1$, and define an  approximate identity $(\psi_{n})_{n\in\N}$ by $\psi_{n}(z)\coloneqq n^{d}\psi(n x)$. Then $\widehat{\psi}_{n}(z) \to \widehat{\psi}(0) = 1$ and $\widehat{\psi}_{n}\widehat{\mu}\to\widehat{\mu}$ uniformly as $n\to\infty$. Thus, by the convolution formula and dominated convergence theorem, 
  \begin{equation}\label{eq:limitn}
      \J_{s,\theta}(\psi_{n}*\mu)^{1/\theta} = \int_{\rd}\big| \widehat{\psi}_{n}\widehat{\mu}(z) \big|^{\frac{2}{\theta}}|z|^{\frac{s}{\theta}-d}\,dz \overset{n\to\infty}{\longrightarrow}\int_{\rd}\big| \widehat{\mu}(z) \big|^{\frac{2}{\theta}}|z|^{\frac{s}{\theta}-d}\,dz = \J_{s,\theta}(\mu)^{1/\theta}<\infty.
  \end{equation}
  Therefore, it suffices to work with $\J_{s,\theta}(\psi_{n}*\mu)$ and  $n$ sufficiently large.

  Choose $k\geq0$ such that $\frac{s-d\theta}{2}\in(2k-d,2k)$ and define $s'$ to satisfy
  \begin{equation*}
      \frac{s-d\theta}{2} = 2k+\frac{s'-d\theta}{2}.
  \end{equation*}
  Then $t\coloneqq \frac{d\theta-s'}{2}\in(0,d)$. For each $n\in\N$ define the functions $\nu_{n}$ by
  \begin{equation*}
      \nu_{n} = \big( (-\Delta)^k(\psi_{n}) \big)*\mu.
  \end{equation*}
  Using \eqref{eq:Laplacian} we have
  \begin{equation*}
    \big|\widehat{\nu}_{n}(z) \big| = |z|^{2k}\, \big|\widehat{\psi}_{n}(z)\widehat{\mu}(z) \big|.
  \end{equation*}
  Therefore, since $\frac{s'}{\theta}-d = \frac{s}{\theta}-d-\frac{4k}{\theta}$,
  \begin{align*}
    \J_{s,\theta}(\psi_{n}*\mu)^{1/\theta} &=\int_{\rd}\big|\reallywidehat{\psi_{n}*\mu }(z) \big|^{\frac{2}{\theta}} |z|^{\frac{s}{\theta}-d}\,dz\\
    &=\int_{\rd}\big| \widehat{\nu}_{n}(z)\big|^{\frac{2}{\theta}}|z|^{-\frac{4k}{\theta}}|z|^{\frac{s}{\theta}-d}\,dz\\
    &=\int_{\rd}\big| \widehat{\nu}_{n}(z) \big|^{\frac{2}{\theta}}|z|^{\frac{s'}{\theta}-d}\,dz\\
    &= \J_{s',\theta}(\nu_{n})^{1/\theta}\\
    &=\int_{\rd}\big| \reallywidehat{\nu_{n}*\widecheck{\kappa_{t}}}(z) \big|^\frac{2}{\theta}\,dz.
  \end{align*}

  Given $\varepsilon>0$, define $\rho_{\varepsilon}\in\S(\rd)$ with $\spt\rho_{\varepsilon}\subseteq B(0,\varepsilon)$ and such that $\rho_{\varepsilon}(z) = 1$ for $z\in B(0,\varepsilon/2)$. Since $s>0$, then $\frac{\theta}{2}>\frac{t-2k}{d}$ and we may use Lemma~\ref{lemma2} with   $\psi =\psi_{n}$, to get
  \begin{equation*}
    \int_{\rd}\big| \reallywidehat{\nu_{n}*\widecheck{\kappa_{t}}}(z) \big|^\frac{2}{\theta}\,dz \approx \int_{\rd} \big| \reallywidehat{\nu_{n}*(\rho_{\varepsilon}\widecheck{\kappa_{t}})}(z) \big|^{\frac{2}{\theta}}\,dz.
  \end{equation*}
   Note that $\kappa_{t}(z)$ is only well-defined for $z\neq0$, but this does not affect the above integrals in a meaningful way. Given  $n\in\N$, choose  $\varepsilon$ sufficiently small to ensure that the function $\nu_{n}* (\rho_{\varepsilon}\widecheck{\kappa_{t}})$ is supported on a closed subset of $(0,1)^d$. Therefore, using the second part of Lemma~\ref{lemma1} with the density $\nu = \nu_{n}*(\rho_{\varepsilon}\widecheck{\kappa_{t}})$,
   \begin{equation*}
       \Big\| \reallywidehat{\nu_{n}*(\rho_{\varepsilon}\widecheck{\kappa_{t}})} \Big\|_{L^{2/\theta}(\rd)}^{\frac{2}{\theta}}\approx \Big\| \reallywidehat{\nu_{n}*(\rho_{\varepsilon}\widecheck{\kappa_{t}})} \Big\|_{\ell^{2/\theta}(\zd)}^{\frac{2}{\theta}}
   \end{equation*}
and, further,   Lemma~\ref{lemma:lemma2sum} gives
  \begin{equation*}
    \Big\| \reallywidehat{\nu_{n}*(\rho_{\varepsilon}\widecheck{\kappa_{t}})} \Big\|_{\ell^{2/\theta}(\zd)}^{\frac{2}{\theta}} \approx \Big\|  \reallywidehat{\nu_{n}*\widecheck{\kappa_{t}}}\Big\|^{\frac{2}{\theta}}_{\ell^{2/\theta}(\zd)}.
  \end{equation*}
Note that
\begin{align*}
  \sum_{z\in\zd}\big| \reallywidehat{\nu_{n}*\widecheck{\kappa_{t}}}(z) \big|^{\frac{2}{\theta}} &=\big| \reallywidehat{\nu_{n}*\widecheck{\kappa_{t}}}(0) \big|^{\frac{2}{\theta}} +  \sum_{z\in\zd\backslash\{ 0 \}}\big| \widehat{\nu}_{n}(z) \big|^{\frac{2}{\theta}} |z|^{\frac{s'}{\theta}-d}\\
& = \big| \reallywidehat{(-\Delta)^k(\psi_{n})*\mu*\widecheck{\kappa_{t}}}(0) \big|^{\frac{2}{\theta}}  + \sum_{z\in\zd\backslash\{ 0 \}}\big| \reallywidehat{\psi_{n}*\mu}(z) \big|^{\frac{2}{\theta}}|z|^{\frac{s}{\theta}-d}.
\end{align*}
Using that $\psi_{n}\in\S(\rd)$, an easy calculation gives
\[
\big| \reallywidehat{(-\Delta)^k(\psi_{n})*\mu*\widecheck{\kappa_{t}}}(0) \big|  =\big| \reallywidehat{(-\Delta)^k(\psi_{n})* \widecheck{\kappa_{t}}}(0) \big| \,  | \widehat {\mu} (0) | \approx | \widehat {\mu} (0) |
\]
and thus,
\begin{equation*}
  \Big\| \reallywidehat{\nu_{n}*(\rho_{\varepsilon}\widecheck{\kappa_{t}})} \Big\|_{\ell^{2/\theta}(\zd)}^{\frac{2}{\theta}} \approx \big|\widehat{\mu} (0)\big|^{\frac{2}{\theta}} +    \sum_{z\in\zd\backslash\{ 0 \}}\big| \reallywidehat{\psi_{n}*\mu}(z) \big|^{\frac{2}{\theta}}|z|^{\frac{s}{\theta}-d}   \overset{n\to\infty}{\longrightarrow}  \big|\widehat{\mu} (0)\big|^{\frac{2}{\theta}} +    \sum_{z\in\zd}\big| \widehat{\mu}(z) \big|^{\frac{2}{\theta}}|z|^{\frac{s}{\theta}-d}
\end{equation*}
again by the convolution formula and the dominated convergence theorem since $\psi_n \to 1$ uniformly as $n \to \infty$.  Together with \eqref{eq:limitn}, this gives the desired result in the case where $\J_{s,\theta}(\mu)<\infty$.

For the converse note that if instead we assume
\begin{equation*}
  \bigg( \big| \widehat{\mu}(0) \big|^{\frac{2}{\theta}} + \sum_{z\in\zd\backslash\{ 0 \}}\big| \widehat{\mu}(z) \big|^{\frac{2}{\theta}}|z|^{\frac{s}{\theta}-d} \bigg)^\theta <\infty,
\end{equation*}
then the proof follows similarly using first Lemma~\ref{lemma:lemma2sum} to get
\begin{equation*}
 \sum_{z\in\zd}\big| \reallywidehat{\psi_{n}*\mu}(z) \big|^{\frac{2}{\theta}}|z|^{\frac{s}{\theta}-d}\approx \big\| \reallywidehat{\nu_{n}*(\rho_{\varepsilon}\widecheck{\kappa_{t}})}  \big\|_{\ell^{2/\theta}(\zd)}^{\frac{2}{\theta}}.
\end{equation*}
Then by Lemma~\ref{lemma1},
\begin{equation*}
  \Big\| \reallywidehat{\nu_{n}*(\rho_{\varepsilon}\widecheck{\kappa_{t}})}  \Big\|_{\ell^{2/\theta}(\zd)}^{\frac{2}{\theta}} \approx \Big\| \reallywidehat{\nu_{n}*(\rho_{\varepsilon}\widecheck{\kappa_{t}})}  \Big\|_{L^{2/\theta}(\rd)}^{\frac{2}{\theta}}.
\end{equation*}
Finally, Lemma~\ref{lemma2} proves the result.

\section{Applications}

\subsection{A discrete formulation for general bounded measures}

One (unavoidable) drawback of the main theorem is that the measure must be supported on a region which is bounded away from the boundary of the unit cube.  Theoretically this is not a problem since measures with bounded support can always be scaled to satisfy this requirement.  However, for certain examples which come  supported on, e.g.~the whole unit cube, it can be inconvenient. Nonetheless, an explicit discrete formulation is still possible for these measures, but one is forced to consider different Fourier coefficients, in particular, evaluated along $\alpha \zd$ instead of $\zd$ for some $\alpha>0$.

\begin{cor}\label{cor:transformation}
  Let $\mu$ be a finite Borel measure on $\rd$ with compact support of diameter $R>0$. Then for all $0<\alpha<1/R$ it holds that  for all $\theta\in(0,1]$,
  \begin{equation*}
      \J_{s,\theta}(\mu) \approx_{\alpha} \bigg(\big| \widehat{\mu}(0) \big|^{\frac{2}{\theta}} + \sum_{z\in \alpha\zd\backslash\{ 0 \}}\big| \widehat{\mu}(z) \big|^{\frac{2}{\theta}}|z|^{\frac{s}{\theta}-d}\bigg)^\theta.
  \end{equation*}
In particular, 
\[
\fs \mu = \sup\bigg\{ s\in\R :\sum_{z\in\alpha\zd\backslash\{ 0 \}}\big| \widehat{\mu}(z) \big|^{\frac{2}{\theta}}|z|^{\frac{s}{\theta}-d} <\infty \bigg\}
\]
and also (for completeness)
\[
\fd \mu =  \sup\bigg\{ s\in\R : \sup_{z\in\alpha\zd }\big| \widehat{\mu}(z) \big|^{2}|z|^{s} <\infty \bigg\}.
\]
\end{cor}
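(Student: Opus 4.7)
The plan is to reduce Corollary~\ref{cor:transformation} to Theorem~\ref{thm:coeffs} via an affine change of variables which places $\spt\mu$ strictly inside the unit cube. Since $\spt\mu$ has diameter $R$, one can pick $x_{0}\in\rd$ with $\spt\mu\subseteq x_{0}+[0,R]^{d}$. For $0<\alpha<1/R$ set $T(x)=\alpha(x-x_{0})+\frac{1-\alpha R}{2}(1,\dots,1)$ and define $\nu\coloneqq T_{\ast}\mu$. Then $\nu$ is a finite Borel measure of the same total mass as $\mu$, supported in $[\delta,1-\delta]^{d}$ with $\delta=\frac{1-\alpha R}{2}>0$. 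A direct calculation from the definition of the Fourier transform shows that the translation in $T$ contributes only a unimodular phase, so $|\widehat{\nu}(z)|=|\widehat{\mu}(\alpha z)|$ for every $z\in\rd$.

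The next step is to rewrite both sides of the desired identity in terms of $\nu$. The substitution $\eta=\alpha\xi$ in the defining energy integral gives $\J_{s,\theta}(\nu)=\alpha^{-s}\J_{s,\theta}(\mu)$, and the analogous substitution $w=\alpha z$ in the lattice sum yields
\begin{equation*}
\sum_{z\in\zd\backslash\{ 0 \}}\big|\widehat{\nu}(z)\big|^{\frac{2}{\theta}}|z|^{\frac{s}{\theta}-d}=\alpha^{\frac{s}{\theta}-d}\sum_{w\in\alpha\zd\backslash\{ 0 \}}\big|\widehat{\mu}(w)\big|^{\frac{2}{\theta}}|w|^{\frac{s}{\theta}-d},
\end{equation*}
together with the trivial identity $|\widehat{\nu}(0)|=|\widehat{\mu}(0)|$. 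Theorem~\ref{thm:coeffs} applied to $\nu$ then compares $\J_{s,\theta}(\nu)$ to its lattice counterpart with an implicit constant depending on $\delta=\delta(\alpha,R)$; substituting the above scaling identities and absorbing the resulting powers of $\alpha$ into the $\approx_{\alpha}$ notation gives the stated comparison for $\mu$, and the characterisation of $\fs\mu$ then follows exactly as in the main theorem.

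The Fourier dimension statement is obtained from the same reduction. A short calculation shows
\begin{equation*}
\sup_{z\in\alpha\zd}\big|\widehat{\mu}(z)\big|^{2}|z|^{s}=\alpha^{s}\sup_{w\in\zd}\big|\widehat{\nu}(w)\big|^{2}|w|^{s},
\end{equation*}
so Kahane's result quoted after Theorem~\ref{thm:coeffs} (applicable since $\nu$ is supported in $[\delta,1-\delta]^{d}$) identifies finiteness of this supremum with finiteness of $\sup_{z\in\rd}|\widehat{\nu}(z)|^{2}|z|^{s}$. Combined with the scale invariance $\fd\mu=\fd\nu$ (which is immediate from $|\widehat{\nu}(z)|=|\widehat{\mu}(\alpha z)|$), this gives the desired characterisation.

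The proof amounts to careful bookkeeping and has no substantive obstacle. The only point requiring attention is to verify that the $\alpha$-dependent scaling factors introduced by the change of variables cancel consistently on both sides of the comparison, which they do and which is precisely why the natural sampling lattice is $\alpha\zd$ rather than $\zd$.
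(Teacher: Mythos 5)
Your proof is correct and takes essentially the same route as the paper's: an affine map $x\mapsto\alpha x+\beta$ pushing $\spt\mu$ into $[\delta,1-\delta]^{d}$, the identity $|\widehat{\nu}(z)|=|\widehat{\mu}(\alpha z)|$, Theorem~\ref{thm:coeffs} applied to the rescaled measure, and Kahane's lemma for the $\theta=0$ statement, with all $\alpha$-dependent constants absorbed into $\approx_{\alpha}$. The only quibble is the prefactor in your lattice-sum identity, which should be $\alpha^{d-\frac{s}{\theta}}$ rather than $\alpha^{\frac{s}{\theta}-d}$; this is immaterial since either constant is absorbed into the $\approx_{\alpha}$ notation.
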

\begin{proof}
Let $0<\alpha<1/R$.  Then there exists a translation  $\beta\in\R^d$ such that the function $\phi$ defined as $\phi(z) = \alpha z + \beta$ satisfies that $\phi\big( \spt\mu \big) \subseteq [\delta,1-\delta]^d$ for some $0<\delta<1/2$ depending only on $\alpha$. Then, writing $\phi(\mu) = \mu \circ \phi^{-1}$, $\big| \widehat{\phi(\mu) }(z)\big| \approx_\alpha \big|\widehat{\mu}(\alpha  z) \big|$.  Therefore, by a simple change of variables,
\[
 \J_{s,\theta}\big(\mu \big) \approx_\alpha  \J_{s,\theta}\big(\phi(\mu) \big) 
\]
 and,  by Theorem~\ref{thm:coeffs},
  \begin{align*}
      \J_{s,\theta}\big(\phi(\mu) \big)^{1/\theta} &\approx_\alpha \big| \widehat{\phi(\mu) }(0) \big|^{\frac{2}{\theta}} + \sum_{z\in\zd\backslash\{ 0 \}}\big| \widehat{\phi(\mu) }(z) \big|^{\frac{2}{\theta}}|z|^{\frac{s}{\theta}-d}\\
      &\approx_{\alpha} \big| \widehat{\mu}(0) \big|^{\frac{2}{\theta}} + \sum_{z\in\zd\backslash\{ 0 \}}\big| \widehat{\mu}(\alpha z) \big|^{\frac{2}{\theta}} |z|^{\frac{s}{\theta}-d}\\
      &\approx_{\alpha}  \big| \widehat{\mu}(0) \big|^{\frac{2}{\theta}}  + \sum_{z\in \alpha\zd\backslash\{ 0 \}}\big| \widehat{\mu}(z) \big|^{\frac{2}{\theta}}|z|^{\frac{s}{\theta}-d},
  \end{align*}
which proves the first claim.  The formulation of the Fourier spectrum for $\theta \in (0,1]$ is then immediate. For the case $\theta=0$, the formulation of the Fourier dimension follows from \cite[Lemma~1, page~252]{kahane}.  In particular, if    $t>0$ is such that
\[
\big| \widehat{\phi(\mu)}(z) \big|    \approx_\alpha \big|\widehat{\mu}(\alpha z) \big|   \lesssim |\alpha z|^{-\frac{t}{2}}\approx_\alpha |  z |^{-\frac{t}{2}}
\]
for   $z \in \mathbb{Z}^d$, then   $\big|\widehat{\mu}(z) \big| \lesssim  |  z |^{-\frac{t}{2}}$ for $z \in \mathbb{R}^d$. 
\end{proof}

\subsection{General bounds for  the Fourier spectrum}

If $\mu$ is a finite measure with compact support then it is easy to see that $\widehat{\mu}$ is Lipschitz. Using this fact, it was shown in \cite[Theorem~1.3]{Fra22} that
\begin{equation*}
    \fs\mu\leq\fd\mu+d\Big( 1+\frac{\fd\mu}{2} \Big)\theta.
\end{equation*}
It was left as an open question whether this bound is sharp in general, \cite[Question~1.4]{Fra22}.  We prove that it is not sharp, by proving a significantly better general bound in the case when the Fourier dimension is strictly positive. 

\begin{prop}\label{propo:app1}
  Let $\mu$ be a finite Borel measure with compact support on $\rd$. Then for all $\theta\in[0,1]$,
  \begin{equation}\label{eq:propo:app1}
      \fs\mu \leq \fd\mu + d\theta.
  \end{equation}
\end{prop}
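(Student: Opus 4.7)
The plan is to extract the desired upper bound directly from the discrete characterization in Corollary~\ref{cor:transformation}. The key (almost trivial) observation is that if $\sum_z a_z < \infty$ with $a_z \ge 0$, then each individual term satisfies $a_z \le \sum_{z'} a_{z'} < \infty$. Applied to the lattice sum that expresses $\J_{s,\theta}(\mu)$, this turns finiteness of the $(s,\theta)$-energy into a pointwise Fourier-decay estimate on $\alpha\zd$, which by Corollary~\ref{cor:transformation} is exactly what is needed to lower-bound the Fourier dimension.

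First I would dispose of the boundary case $\theta = 0$: there the definition of $\J_{s,0}$ makes the Fourier spectrum (evaluated at $\theta=0$) literally equal to $\fd\mu$, so the claim is an equality. We may therefore assume $\theta \in (0,1]$. Fix $s > 0$ with $\J_{s,\theta}(\mu) < \infty$, let $R$ be the diameter of $\spt \mu$, and choose any $\alpha \in (0,1/R)$. Corollary~\ref{cor:transformation} then gives
\[
\sum_{z \in \alpha\zd \setminus \{0\}} \big|\widehat{\mu}(z)\big|^{2/\theta} |z|^{s/\theta - d} \;\lesssim_{\alpha}\; \J_{s,\theta}(\mu)^{1/\theta} < \infty.
\]
Since every term is non-negative and the total is finite, each term is bounded by a single constant $C = C(\mu,s,\theta,\alpha)$; raising the inequality $|\widehat{\mu}(z)|^{2/\theta}|z|^{s/\theta - d}\le C$ to the power $\theta$ yields
\[
\big|\widehat{\mu}(z)\big|^{2} |z|^{s - d\theta} \le C^{\theta} \qquad \text{for every } z \in \alpha\zd \setminus\{0\}.
\]

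If $s \le d\theta$ the conclusion $s \le \fd\mu + d\theta$ is automatic from $\fd\mu \ge 0$, so I may assume $s > d\theta$. In that case the point $z = 0$ contributes $0$ to $\sup_{z \in \alpha\zd}|\widehat{\mu}(z)|^2|z|^{s-d\theta}$, so the displayed estimate gives $\sup_{z \in \alpha\zd}|\widehat{\mu}(z)|^2|z|^{s-d\theta} < \infty$. Invoking the Fourier-dimension formulation in Corollary~\ref{cor:transformation} (an instance of Kahane's lemma) then yields $\fd\mu \ge s - d\theta$, i.e.\ $s \le \fd\mu + d\theta$. Taking the supremum over all admissible $s$ gives $\fs\mu \le \fd\mu + d\theta$.

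There is no real technical obstacle beyond having Corollary~\ref{cor:transformation} available; the substantive content collapses to the elementary fact that a convergent non-negative series pointwise dominates its summands. The gain over the Lipschitz-based bound of \cite[Theorem~1.3]{Fra22} comes precisely from this mechanism: whereas Lipschitz continuity of $\widehat{\mu}$ only parlays Fourier information at a single favourable point into a local estimate, the lattice sum pools the Fourier-decay information across all scales simultaneously, producing the much sharper slope $d$ at $\theta = 0^+$.
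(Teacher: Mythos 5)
Your proposal is correct and is essentially the paper's argument in contrapositive form: both rest entirely on Corollary~\ref{cor:transformation}, the paper taking $t>\fd\mu$ and a sequence of large Fourier coefficients on $\alpha\zd$ to force divergence of the lattice energy once $s\ge t+d\theta$, while you start from a finite lattice energy, bound each summand, and feed the resulting pointwise decay on $\alpha\zd$ into the Kahane-based Fourier-dimension formula of the same corollary. There is no gap; the same corollary and the same mechanism carry the whole proof in either direction.
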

\begin{proof}
  For $\theta=0$ or for  $\fd \mu = \infty$ the result is trivial, so we only consider the other cases.  Let $R<\infty$ denote the diameter of the support of $\mu$.   Let $\theta\in(0,1]$ and  $\fd\mu<t<\infty$.  It then follows from Corollary~\ref{cor:transformation} that, for some $0<\alpha<1/R$, there exists a sequence 
 $(z_{n})_{n\in\N}\subseteq \alpha\zd$ with $|z_{n}|\to\infty$ and such that for all $n\in\N$,
  \begin{equation*}
      \big| \widehat{\mu}(z_{n}) \big|\gtrsim|z_{n}|^{-\frac{t}{2}}.
  \end{equation*}
  Then, using Corollary~\ref{cor:transformation} again,
  \begin{align*}
    \J_{s,\theta}(\mu)^{1/\theta} &\approx \big| \widehat{\mu}(0) \big|^{\frac{2}{\theta}} + \sum_{ z\in\alpha\zd\backslash\{ 0 \}}\big| \widehat{\mu}(z) \big|^{\frac{2}{\theta}}|z|^{\frac{s}{\theta}-d}\\
    &\gtrsim \sum_{n\in\N}|z_{n}|^{\frac{-t+s}{\theta}-d}\\
    &= \infty,
  \end{align*}
  whenever $s\geq t+d\theta$. Therefore, $\fs\mu\leq\fd\mu + d\theta$, as required.
\end{proof}

From \cite[Corollary~3.3]{Fra22} we know that the bound obtained in Proposition~\ref{propo:app1} is sharp for measures with Fourier dimension $0$. We believe this is also the case for measures with positive Fourier dimension. One  way of building sharp examples could be to consider the convolution of two measures given by the following lemmas.

For the first lemma recall that the definition of Frostman dimension of a Borel measure $\mu$ on $\rd$ is
\begin{equation*}
    \frd\mu = \sup \big\{ s\geq0 : \mu\big( B(x,r) \big)\lesssim r^s\text{ for }x\in\rd,~r>0\big\}.
\end{equation*}
For any finite compactly supported Borel measure $\mu$ the inequality $\frd\mu\leq\sd\mu$ holds.

\begin{lma}\label{lemma:existenceMeasure1}
  There exists a finite compactly supported Borel measure $\mu$ on $\rd$ with $\fd\mu = 0$ and $\sd\mu = d$.
\end{lma}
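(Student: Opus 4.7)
The plan is to produce $\mu$ as an absolutely continuous measure $\mu = f\,dx$ with nonnegative density $f$ supported in the open unit cube such that $f \in L^2(\rd)$ (which forces $\sd\mu \ge d$ by Plancherel) while $\widehat f$ decays no faster than geometrically along a super-lacunary sequence of integer frequencies. The latter will simultaneously force $\fd\mu = 0$ and, through a matching lower bound on $\widehat f$ near each of those frequencies, $\sd\mu \le d$. The idea is to superimpose slowly decaying cosine modulations on a smooth bump, calibrating the amplitudes so that positivity is preserved.

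Concretely, fix a nonnegative $\phi \in \S(\rd)$ supported in a closed subset of $(0,1)^d$ with $\widehat\phi(0) > 0$, let $e_1 = (1,0,\ldots,0)$, and choose the lacunary frequencies $n_k = 2^{2^k} e_1 \in \zd$ together with the geometrically decaying amplitudes $a_k = 2^{-k}$. Define
\[
f(x) = \phi(x)\left( C + \sum_{k=1}^\infty a_k \cos(2\pi\, n_k \cdot x) \right), \qquad \mu = f\,dx,
\]
with $C > \sum_k a_k = 1$ so that $f \ge 0$. Since $f$ is bounded with compact support, $\mu$ is finite with compact support, $f \in L^2(\rd)$, and $\sd\mu \ge d$ is immediate by Plancherel. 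The Fourier transform decomposes as
\[
\widehat f(\xi) = C\,\widehat\phi(\xi) + \tfrac12 \sum_k a_k \bigl(\widehat\phi(\xi - n_k) + \widehat\phi(\xi + n_k)\bigr),
\]
and at $\xi = n_k$ the term $\tfrac{a_k}{2}\widehat\phi(0)$ will dominate, yielding $|\widehat\mu(n_k)| \approx a_k = 2^{-k}$ while $|n_k| = 2^{2^k}$. Consequently $a_k |n_k|^{s/2} \to \infty$ for every $s > 0$, giving $\fd\mu = 0$. The same lower bound yields $|\widehat f(\xi)|^2 \gtrsim a_k^2$ on a ball of fixed radius around each $n_k$, and since these balls are disjoint for $k$ large, for any $s > d$ one obtains
\[
\int |\widehat\mu(z)|^2 |z|^{s-d}\,dz \gtrsim \sum_k a_k^2 |n_k|^{s-d} = \sum_k 4^{-k}\, 2^{(s-d)\cdot 2^k} = \infty,
\]
the doubly exponential factor dominating $4^k$ for each fixed $s > d$, giving $\sd\mu \le d$.

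The only nontrivial step is the dominance estimate $|\widehat\mu(n_k)| \approx a_k$. It amounts to showing $|C\widehat\phi(n_k)| + \sum_{j \ne k} a_j |\widehat\phi(n_k \pm n_j)| = o(a_k)$, which follows from combining the Schwartz decay $|\widehat\phi(z)| \lesssim_N (1+|z|)^{-N}$ for arbitrary $N$ with the super-lacunarity bound $|n_k \pm n_j| \gtrsim 2^{2^{\max(j,k)}}$ valid for all $j \ne k$: the rapid-decay factor comfortably outstrips the amplitudes $a_j = 2^{-j}$. Once that bookkeeping is in place, the Fourier and Sobolev dimension computations go through as indicated.
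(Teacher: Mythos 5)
Your construction is correct, but it takes a genuinely different route from the paper. The paper's proof is essentially a citation-plus-combination argument: it invokes \cite[Example~7]{EPS15} to get a compact $X\subseteq\R$ with $\fd X=0$ and $\mathcal{L}^1(X)>0$, takes $\nu=\mathcal{L}^1|_X$ and the product $\nu^d$, obtains $\sd\nu^d\geq d$ from the Frostman bound $\frd\nu^d\geq d$ (via $\frd\leq\sd$), and obtains the matching upper bound $\sd\nu^d\leq d$ not by direct computation but by applying Proposition~\ref{propo:app1} with $\fd\nu^d=0$. You instead build an explicit absolutely continuous example: a smooth bump modulated by a super-lacunary cosine series with geometrically decaying amplitudes (a Riesz-product-flavoured density), getting $\sd\mu\geq d$ from Plancherel and $\sd\mu\leq d$ by a direct lower bound $|\widehat\mu|\gtrsim a_k$ on fixed-radius balls around the frequencies $n_k$, which makes the energy integral diverge for every $s>d$; the same lower bound at the points $n_k$ kills all polynomial decay and gives $\fd\mu=0$. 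The bookkeeping you defer (Schwartz decay of $\widehat\phi$ against the super-lacunary separation $|n_k\pm n_j|\gtrsim 2^{2^{\max(j,k)}}$, and continuity of $\widehat\phi$ near $0$ for the ball estimate) is routine and does close the argument. What your approach buys is self-containedness and explicitness: it needs neither the Ekstr\"om--Persson--Schmeling example nor Proposition~\ref{propo:app1}, and it exhibits a concrete density. What the paper's approach buys is brevity and the illustration that its new general bound $\fs\mu\leq\fd\mu+d\theta$ (at $\theta=1$) can replace exactly the kind of hands-on upper-bound computation you carry out; it also ties the lemma to the known phenomenon of Fourier-dimension-zero sets of positive Lebesgue measure.
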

\begin{proof}
  By \cite[Example~7]{EPS15}, there exists a compact set $X\subseteq\R$ with $\fd X = 0$ and $\mathcal{L}^1(X) >0$. Let $\nu$ be the restriction of $\mathcal{L}^1$ to $X$ and consider the product measure $\nu^d$ on $\rd$. Then $\fd \nu^d = 0$, and since $\nu^d\big( B(z,r) \big)\leq\mathcal{L}^d\big( B(z,r) \big)\lesssim r^d$, its Frostman dimension is $\frd\nu^d \geq  d$. Proposition~\ref{propo:app1} gives $\sd\nu^d \leq d$, so it must be that $\sd\nu^d =d$. Letting $\mu \coloneqq \nu^d$ gives the desired result.
\end{proof}

For the second lemma recall that a set $X\subseteq\rd$ is Salem if $\fd X = \hd X$. We say also that a Borel measure $\mu$ is Salem if $\fd\mu = \sd\mu$. The following lemma gives the existence of Salem measures on $\rd$ of any prescribed dimension in $[0,d)$.
\begin{lma}\label{lemma:existenceMeasure2}
  For  $t\in[0,d)$ there exists a finite compactly supported Borel measure $\mu$ on $\rd$ that is Salem with dimension $t$.
\end{lma}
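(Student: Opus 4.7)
The plan is to dispose of the degenerate $t=0$ case by hand and, for $t\in(0,d)$, to invoke a classical Salem set construction and upgrade the resulting Fourier decay to equality of Fourier and Sobolev dimensions via a Hausdorff-dimension sandwich.  For $t=0$ any Dirac mass $\delta_x$ has $|\widehat{\delta_x}|\equiv 1$, so that $\fd\delta_x=0$, and (adopting the usual convention that $\sd$ is non-negative, consistent with $\fd\le\sd$) also $\sd\delta_x=0$, as required.

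For $t\in(0,d)$ the plan is to cite a classical construction producing a compact Salem set $X\subseteq\rd$ with $\hd X=t$ equipped with a finite Borel measure $\mu$ supported on $X$ satisfying $|\widehat{\mu}(z)|\lesssim|z|^{-t/2}$.  Such a construction is due to Salem \cite{salem} in dimension one and was extended to arbitrary $\rd$ by Kahane \cite{kahane} via random images of fractional Brownian motion, with further compactly supported random Cantor-type examples provided in \cite{bluhm,PL11}.  The Fourier decay immediately gives $\fd\mu\ge t$, whence $\sd\mu\ge\fd\mu\ge t$ by the inequality $\fd\le\sd$ from the preliminaries.

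For the reverse inequality $\sd\mu\le t$ I would argue by contradiction: suppose $\sd\mu>t$.  Since the integrand $|\widehat{\mu}(z)|^2|z|^{s-d}$ is monotone in $s$ at infinity on $|z|>1$ and $\mu$ has positive mass (so the integrand is integrable at the origin precisely when $s>0$), one can choose some $s\in(t,\min(d,\sd\mu))$ for which $\int|\widehat{\mu}(z)|^2|z|^{s-d}\,dz<\infty$.  By \eqref{eq:energy} this forces $I_s(\mu)<\infty$, and the classical energy characterisation of Hausdorff dimension then gives $\hd(\spt\mu)\ge s>t=\hd X$, contradicting the construction.  Hence $\sd\mu=t=\fd\mu$, making $\mu$ Salem of dimension $t$.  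The main obstacle is really only correctly identifying and invoking the prior Salem set construction in $\rd$; once $X$ and $\mu$ are in hand, the sandwich $\fd\mu\le\sd\mu\le\hd(\spt\mu)$ closes the argument routinely.
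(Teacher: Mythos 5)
Your argument is correct and essentially the same as the paper's: both proofs take a known construction of a single measure with $\fd\mu\ge t$ whose support has Hausdorff dimension exactly $t$, and then use the energy characterisation (valid because $t<d$) to force $\sd\mu\le t$, hence $\fd\mu=\sd\mu=t$. The only difference is the source cited for the construction --- the paper invokes the explicit Salem measures of \cite{FH23}, while you invoke the classical Salem/Kahane random constructions --- and either reference supplies exactly what the sandwich argument needs.
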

\begin{proof}
  The case $t=0$ is trivial. For $t\in(0,d)$, by \cite[Theorem~1.2]{FH23} there exists a  finite compactly supported Borel  measure $\mu$ with $\fd\mu \geq t$ such that $\hd\spt\mu = t$. Since $t<d$ then $\sd\mu \leq t$ and $\mu$ is a Salem measure with dimension $t$.
\end{proof}

  Now, to build a measure with Fourier dimension $t\in(0,d)$ that satisfies \eqref{eq:propo:app1} with equality we may consider the convolution of the measure $\mu_{1}$ given by Lemma~\ref{lemma:existenceMeasure1} with the measure $\mu_{2}$ given by  Lemma~\ref{lemma:existenceMeasure2}.

  The concavity of $\fs\mu_{1}$ and Proposition~\ref{propo:app1} imply that for all $\theta\in[0,1]$, $\fs\mu_{1} = d\theta$. Since $\fd\mu_{1}=0$, given   $\gamma>0$ there exists some sequence $(z_{n})_{n\in\N}$ such that
  \begin{equation*}
      \lim_{n\to\infty}\big| \widehat{\mu_{1}}(z_{n}) \big|^2 |z_{n}|^\gamma = \infty.
  \end{equation*}
  In the construction of $\mu_{2}$ in \cite{FH23}, there is some freedom which we believe can be leveraged so that $\mu_{2}$ has bad Fourier decay for the same sequence as $\mu_{1}$, but we do not prove this explicitly here. More precisely,  assume that $\mu_{2}$ may be modified  appropriately so that for all $s>t$, $\limsup_{n\to\infty}\big| \widehat{\mu_{2}}(z_{n}) \big|^2 |z_{n}|^s = \infty$ where $(z_{n})_{n\in\N}$  is the sequence given to us by $\mu_1$.    Then, given $s>t$ and $s-t>\gamma>0$, applying the convolution formula
  \begin{equation*}
      \limsup_{n\to\infty}\big| \reallywidehat{\mu_{1}*\mu_{2}}(z_{n}) \big|^2 |z_{n}|^s \geq \liminf_{n\to\infty} \big| \widehat{\mu_{1}}(z_{n}) \big|^2|z_{n}|^{\gamma} \limsup_{n\to\infty}\big| \widehat{\mu_{2}}(z_{n}) \big|^2|z_{n}|^{s-\gamma} = \infty,
  \end{equation*}
 and  so $\fd\mu_{1}*\mu_{2} \leq t = \fd \mu_2$.  Moreover, \cite[Theorem~6.1]{Fra22} gives
  \begin{equation*}
    \fs \mu_{1}*\mu_{2} \geq\sup_{\lambda\in[0,1]}\Big( \fd^{\lambda\theta}\mu_{1} + \fd^{(1-\lambda)\theta}\mu_{2} \Big).
  \end{equation*}
  In particular, setting $\lambda = 1$,
  \begin{equation*}
      \fs\mu_{1}*\mu_{2} \geq \fs\mu_{1} + \fd\mu_{2}\geq d\theta + \fd\mu_{1}*\mu_{2}.
  \end{equation*}
  Defining $\mu \coloneqq \mu_{1}*\mu_{2}$, by Proposition~\ref{propo:app1} the reverse equality also holds, showing that \eqref{eq:propo:app1} is indeed sharp.

\subsection{Bounds involving $\ell^{p}$ spaces}

Given a finite  Borel measure $\mu$ on $\rd$, it follows from H\"older's inequality that if $\widehat{\mu}\in L^{\frac{2p}{\theta}}(\rd)$ for some $\theta\in[0,1]$ and $p\in(1,\infty)$, then $\fs\mu\geq d\theta/p$. Using the representation of Theorem~\ref{thm:coeffs} we now obtain the same bound in the case where $\widehat{\mu}\in\ell^{\frac{2p}{\theta}}(\zd\backslash\{ 0 \})$, provided $\mu$  is  supported in $[\delta,1-\delta]^d$ for some $0<\delta<1/2$.
\begin{cor}\label{cor:lp}
  Let $\theta\in(0,1]$, $p\in(1,\infty)$, and $\mu$ be a finite Borel measure on $\rd$  supported in $[\delta,1-\delta]^d$ for some $0<\delta<1/2$.  If   $\widehat{\mu}\in\ell^{\frac{2p}{\theta}}(\zd\backslash\{ 0 \})$, then $\fs\mu \geq d\theta/p$.
\end{cor}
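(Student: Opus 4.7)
The plan is to reduce the statement to a direct application of Theorem~\ref{thm:coeffs} combined with H\"older's inequality on $\mathbb{Z}^d \setminus \{0\}$. Since $\mu$ is supported in $[\delta, 1-\delta]^d$, Theorem~\ref{thm:coeffs} tells us that
\[
\fs \mu = \sup\bigg\{ s \in \R : \sum_{z \in \zd \setminus \{0\}} \big|\widehat{\mu}(z)\big|^{2/\theta} |z|^{s/\theta - d} < \infty \bigg\},
\]
so the task is to show that this sum is finite for every $s < d\theta/p$.

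To that end, fix $s < d\theta/p$ and apply H\"older's inequality with conjugate exponents $p$ and $p' = p/(p-1)$, writing the summand as $|\widehat{\mu}(z)|^{2/\theta} \cdot |z|^{s/\theta - d}$. This gives
\[
\sum_{z \in \zd \setminus \{0\}} \big|\widehat{\mu}(z)\big|^{2/\theta} |z|^{s/\theta - d} \leq \bigg( \sum_{z \in \zd \setminus \{0\}} \big|\widehat{\mu}(z)\big|^{2p/\theta} \bigg)^{\!1/p} \bigg( \sum_{z \in \zd \setminus \{0\}} |z|^{(s/\theta - d)p'} \bigg)^{\!1/p'}.
\]
The first factor is finite by the hypothesis $\widehat{\mu} \in \ell^{2p/\theta}(\zd \setminus \{0\})$. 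The second factor is a standard lattice sum of the form $\sum_{z \neq 0} |z|^{-\beta}$, which converges if and only if $\beta > d$. A short calculation shows that $(d - s/\theta)p' > d$ is equivalent to $s < d\theta/p$, which holds by assumption, so the second factor is also finite.

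Combining the two bounds yields finiteness of the defining sum for every $s < d\theta/p$, so by Theorem~\ref{thm:coeffs} we conclude $\fs \mu \geq d\theta/p$. There is no substantive obstacle here; the only point requiring a moment of care is the bookkeeping of H\"older exponents to verify that the threshold $d\theta/p$ matches the critical exponent for convergence of the lattice Riesz sum, exactly paralleling the continuous $L^{2p/\theta}$ argument referenced just before the statement.
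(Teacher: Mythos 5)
Your proposal is correct and follows essentially the same route as the paper: invoke Theorem~\ref{thm:coeffs} to reduce $\fs\mu$ to the lattice sum, split via H\"older with exponents $p,p'$, and note the Riesz-type lattice sum converges exactly when $s<d\theta/p$. The only cosmetic difference is that the paper bounds $\J_{s,\theta}(\mu)^{1/\theta}$ directly while you use the supremum formulation of $\fs\mu$; the substance is identical.
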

\begin{proof}
  Let $s>0$, $\theta\in(0,1]$, and $p\in(1,\infty)$ be such that   $\widehat{\mu}\in\ell^{\frac{2p}{\theta}}(\zd\backslash\{ 0 \})$. For  $p'$, the conjugate exponent of $p$, Theorem~\ref{thm:coeffs} and H\"older's inequality give
  \begin{align*}
      \J_{s,\theta}(\mu)^{1/\theta} &\approx \sum_{ z\in\zd\backslash\{ 0 \}} \big| \widehat{\mu}(z) \big|^{\frac{2}{\theta}}|z|^{\frac{s}{\theta}-d}\\
      &\leq \bigg( \sum_{  z\in\zd\backslash\{ 0 \}}\big| \widehat{\mu}(z) \big|^{\frac{2p}{\theta}} \bigg)^{\frac{1}{p}}\bigg( \sum_{  z\in\zd\backslash\{ 0 \}}|z|^{\big( \frac{s}{\theta}-d \big)p'} \bigg)^{\frac{1}{p'}}\\
      &\approx \bigg( \sum_{  z\in\zd\backslash\{ 0 \}}|z|^{\big( \frac{s}{\theta}-d \big)p'} \bigg)^{\frac{1}{p'}}.
  \end{align*}
The last sum is finite provided $s<d\theta/p$, and thus $\fs\mu\geq d\theta/p$.
\end{proof}


\end{document}